 \newtheorem{proposition}{Proposition}[section]
  \newtheorem{lemma}[proposition]{Lemma} 
  \newtheorem{corollary}[proposition]{Corollary}
\theoremstyle{definition} 
  \newtheorem{definition}[proposition]{Definition}
  \newtheorem{example}[proposition]{Example}
  \theoremstyle{remark} 
  \newtheorem{remark}[proposition]{Remark}
  \newcounter{c} 
  \newcommand{\etyk}[1]{\vspace{-7.4mm}$$\begin{equation}\Label{#1} 
  \addtocounter{c}{1}} 
  \renewcommand{\]}{\ifnum \value{c}=1 $$\else \end{equation}\fi} 
   \numberwithin{equation}{section}
\def\lto{\longmapsto}
\def\lra{\longrightarrow}
\def\FF{{\mathbb F}}
\def\ZZ{{\mathbb Z}}
\def\lb{\pmb{\{}}
\def\rb{\pmb{\}}}
\def\blb{\pmb{\big\{}}
\def\brb{\pmb{\big\}}}
\def\Der{\mathrm{Der}}
\title{Lie trusses and heaps of Lie affebras}
\author*[a,b]{Tomasz Brzezi\'nski}
\affiliation[a]{Department of Mathematics, Swansea University \\
Swansea University Bay Campus,
Fabian Way,
Swansea,
  Swansea SA1 8EN, U.K.}
\emailAdd{T.Brzezinski@swansea.ac.uk}
\affiliation[b]{Faculty of Mathematics, University of Bia{\l}ystok\\
 K.\ Cio{\l}kowskiego  1M,
15-245 Bia\-{\l}ys\-tok, Poland}
\abstract{A frame-independent formulation of Lie brackets on affine spaces or {\em Lie affebras} introduced  in [K.\ Grabowska, J.\ Grabowski \& P.\ Urba\'nski,  Lie brackets on affine bundles, {\em Ann.\ Global Anal.\ Geom.} {\bf 24} (2003), 101--130] is given.}
\begin{document}
\maketitle

\section{Introduction}
A gauge or frame-independent formulation of analytical dynamics \cite{Wei:uni}, \cite{Tul:fra}, \cite{Ben:fib}, \cite{Urb:aff}, \cite{MasVig:non} requires one to replace vector bundles by affine fibre bundles (see \cite[Section~1]{GraGra:AV} for an excellent summary of reasons for that). Motivated by this K.\ Grabowska, J.\ Grabowski and P.\ Urba\'nski introduced and studied Lie brackets on affine bundles (or on affine spaces) in a series of papers \cite{GraGra:Lie}, \cite{GraGra:AV}, \cite{GraGra:fra}. They coined the term {\em Lie affebra} to describe an affine space together with an anti-symmetric, bi-affine bracket with values in the underlying vector space that satisfies a suitable version of the Jacobi identity. Alas, the usage of a vector space in this definition seems to indicate a departure from the frame- or observer-independent philosophy as every vector space has a distinguished element (the zero vector). In the present note we propose a reformulation of Lie affebras in the way in which no distinction between elements of the affine space is required. The key here is the observation that an affine space (and affine transformation) can be defined without reference to a vector space, as a set with two ternary operations. The first of these reflects the translation of a point by a unique vector defined by two other points, and describes an algebraic system known as an {\em abelian heap} \cite{Pru:the}. The second operation represents the action of the field of scalars and is obtained as translation of a point by a rescaled vector originating at this point. This leads us to define a Lie bracket on an abelian heap (in particular an affine space) as a ternary operation satisfying compatibility with ternary heap operation (or both ternary operations that define an affine space). These conditions include a suitable anti-symmetry property and the Jacobi identity. The latter is necessarily different from that discussed in context of Jacobson's  triple Lie systems \cite{Jac:Lie} or  Filippov's $3$-Lie algebras \cite{Fil:Lie} as no universally defined addition of points of an abelian heap (or an affine space) is available. We term an abelian heap with a ternary Lie bracket a {\em Lie truss} and an affine space with a ternary Lie bracket a {\em heap of Lie affebras}. We show that through fixing a point in an affine space and thus deriving its underlying vector space heaps of Lie affebras can be reduced to Lie affebras of Grabowska, Grabowski and Urba\'nski and, consequently, Lie algebras. In a similar way fixing an element of a Lie truss one obtains a Lie ring.

The presented {\em internalisation} of the definition of a Lie affebra leading to systems of interacting ternary operations seems to mesh well with the recent resurgence of interest in such systems, both in pure mathematics (see e.g.\ \cite{DupWer:str} and references therein) and in mathematical physics (see e.g.\ \cite{Bru:sem},  \cite{Ker:ter0},\cite{Ker:ter}).

\section{Heaps, affine spaces and trusses}
As is well-known (see e.g.\ \cite{ncat:aff}) but explored most recently in \cite{BreBrz:hea} an affine space can be defined as a set with two ternary operations. First, recall that an {\em abelian heap} \cite{Pru:the}, \cite{Bae:ein} is a set $A$ with a ternary operation
$$
[-,-,-]: A \times A\times A\lra A,
$$
such that, for all $a_i \in A$, $i=1,\ldots, 5$,
\begin{subequations} \label{heap}
\begin{equation}\label{h.assoc}
[a_1,a_2, [ a_3,a_4,a_5]] = [[a_1,a_2,  a_3],a_4,a_5]
\end{equation}
\begin{equation}\label{h.Mal}
[a_1,a_2, a_2] = a_1 =  [a_2,a_2,  a_1] 
\end{equation}
\begin{equation}\label{h.ab}
[a_1,a_2, a_3] = [a_3,a_2,  a_1] 
\end{equation}
\end{subequations}
A {\em homomorphism of abelian heaps} is a function $f: A\lra B$, such that
\begin{equation}\label{h.hom}
f([a_1,a_2, a_3]) = [f(a_1),f(a_2), f(a_3)], \qquad \forall a_i\in A.
\end{equation}
With every abelian heap $A$ one can associate a family of mutually isomorphic abelian groups labelled by elements of $A$ as follows. For all $o\in A$, $A$ is a group with neutral element $o$ and addition and inverses defined by
\begin{equation}\label{retract}
a+b = [a,o,b], \qquad -a = [o,a,o].
\end{equation}
This group is called the {\em retract of $A$ at $o$} and is denoted by $A(o)$. Conversely, any abelian group $A$ can be understood as an abelian heap with the operation 
\begin{equation}\label{g.h}
[a,b,c] = a-b+c.
\end{equation}

The rules of an abelian heap \eqref{heap} imply that the placing of brackets in-between any odd number of elements of $A$ does not change the value of the expression, and hence we will write
$[a_1,a_2,\ldots, a_{2n+1}] \in A$ for any possible repeated application of $[-,-,-]$ to $(a_1,a_2,\dots,a_{2n+1})$. Furthermore, any pair of identical neighbours cancel each other out (we refer to this as the {\em cancellation rule}), and 
$$
{}[a_1,a_2,\ldots, a_{2n+1}]  = [a_{\sigma(1)},a_{\zeta(2)}, \ldots, a_{\zeta(2n)}, a_{\sigma(2n+1)}],
$$
where $\sigma$ is any permutation of $\{1,3,\ldots, 2n+1\}$ and $\zeta$ is a permutation of $\{2,4,\ldots, 2n\}$. This is referred to as the {\em reshuffling rule}. Thus any pair of identical elements occupying positions of opposite parity can be safely removed and provided that the remaining elements are reshuffled so that the parity of their final positions is identical with the parity of their original positions the value of the expression in brackets will be retained.

An affine space over a field $\FF$  can be identified with a non-empty unital and absorbing {\em heap of modules} over $\FF$ \cite{BreBrz:hea}, i.e.\ with an abelian heap $A$ together with an operation 
$$
\Lambda: \FF\times A\times A\lra A,
$$ 
satisfying the following conditions, for all $\alpha,\beta,\gamma \in \FF$ and $a,b,c,d\in A$,
\begin{subequations}\label{a.space}
\begin{equation}\label{l.heap}
\Lambda(\alpha -\beta +\gamma , a,b) = [\Lambda(\alpha,a,b), \Lambda(\beta,a,b), \Lambda(\gamma,a,b)],
\end{equation}
\begin{equation}\label{r.heap}
\Lambda(\alpha, a,[b,c,d]) = [\Lambda(\alpha,a,b), \Lambda(\alpha,a,c), \Lambda(\alpha,a,d)],
\end{equation}
\begin{equation}\label{a.assoc}
\Lambda(\alpha\beta, a,b) = \Lambda(\alpha, a, \Lambda(\beta, a,b)),
\end{equation}
\begin{equation}\label{b.change}
\Lambda(\alpha, a,b) = [\Lambda(\alpha, c,b),\Lambda(\alpha, c,a),a],
\end{equation}
\begin{equation}\label{0.1}
\Lambda(0,a,b) = a = \Lambda(1,b,a).
\end{equation}
\end{subequations}
From this perspective, a homomorphism of affine spaces is a homomorphism of heaps $f:A\lra B$, such that, for all $a,b\in A$ and $\alpha\in \FF$,
\begin{equation}\label{hom.aff}
f\left(\Lambda_A(\alpha, a,b)\right) = \Lambda_B(\alpha, f(a), f(b)).
\end{equation}

A connection with perhaps more familiar formulation of the definition of an affine space as a set $A$ with a free and transitive action of an $\FF$-vector space $\overset{\lra}A$ is given in the following way. Let $\overset{\lra}{ab}$ denote the unique vector in $\overset{\lra}A$ such that $b= a+ \overset{\lra}{ab}$. Then, for all $a,b,c\in A$,
\begin{equation}\label{heap.aff}
[a,b,c] = a+\overset{\lra}{bc}, \qquad \Lambda(\alpha, a,b) = a+ \alpha\,\overset{\lra}{ab}.
\end{equation}

A few comments about conditions \eqref{a.space} are now in order. Equation \eqref{l.heap} means that, for all $a,b\in A$, $\Lambda(-,a,b): \FF\lra A$ is a homomorphism of heaps (where $\FF$ is viewed as a heap associated to an abelian group as in \eqref{g.h}). Combined with \eqref{0.1} it in fact says that $\Lambda(-,a,b)$ is an additive map from $\FF$ to $A(a)$. Condition \eqref{r.heap} means that $\Lambda(\alpha,a,-): A\lra A$ is a homomorphism of heaps. One can use  \eqref{b.change} to show that also $\Lambda(\alpha, -,b): A\lra A$ is a homomorphism of heaps. Equation \eqref{a.assoc} indicates the associativity of the scalar action of $\FF$ on $\overset{\to}A$. Finally, \eqref{b.change} implies that $\Lambda(\alpha, a,a) = a$, for all $\alpha\in \FF$ and $a\in A$.

\begin{lemma}\label{lem.aff} Let $(A,\Lambda)$ be an affine  space over $\FF$ in the sense of conditions \eqref{a.space}. Then, for all $o\in A$, the abelian group $A(o)$ is a vector space with the multiplication by scalars,
\begin{equation}\label{scalar}
\alpha a = \Lambda(\alpha,o,a).
\end{equation}
Furthermore $A$ is an affine space with the free and transitive action of the vector space $\overset{\lra} A = A(o)$ in which equations \eqref{heap.aff} hold.
\end{lemma}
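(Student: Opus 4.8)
The plan is to treat the two assertions in turn: first establish the vector-space axioms for the scalar multiplication \eqref{scalar} on the retract $A(o)$, and then construct the free and transitive action of $A(o)$ on $A$ and verify that it satisfies \eqref{heap.aff}. Throughout I would use the two identities recorded after \eqref{a.space}, namely $\Lambda(0,a,b)=a$, $\Lambda(1,b,a)=a$ from \eqref{0.1} and their consequence $\Lambda(\alpha,a,a)=a$.

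For the vector-space structure each axiom reduces to a one- or two-line computation from \eqref{a.space}. The unit axiom $1\cdot a=a$ is immediate from \eqref{0.1}; associativity $(\alpha\beta)a=\alpha(\beta a)$ is exactly \eqref{a.assoc} with the first two arguments set to $o$. Distributivity over scalar addition, $(\alpha+\beta)a=\alpha a+\beta a$, follows from \eqref{l.heap} after writing $\alpha+\beta=\alpha-0+\beta$ and using $\Lambda(0,o,a)=o$ to collapse the middle entry of the resulting heap bracket to the neutral element of $A(o)$. Distributivity over vector addition, $\alpha(a+b)=\alpha a+\alpha b$, follows from \eqref{r.heap} together with $\Lambda(\alpha,o,o)=o$. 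This settles the first claim.

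For the second assertion the essential choice is the right action: for $a\in A$ and a vector $v\in\overset{\lra}A=A(o)$ I set $a+v:=[a,o,v]$. That this is an action of the additive group of $A(o)$ is a consequence of heap associativity \eqref{h.assoc} and the identities \eqref{h.Mal}: the neutral element acts trivially since $[a,o,o]=a$, and $(a+v)+w=[[a,o,v],o,w]=[a,o,[v,o,w]]=a+(v+w)$. Transitivity and freeness both reduce to the equation $[a,o,v]=b$, whose unique solution is $v=[o,a,b]$; this simultaneously identifies the translation vector as $\overset{\lra}{ab}=[o,a,b]$. The first equation of \eqref{heap.aff} then drops out at once: $a+\overset{\lra}{bc}=[a,o,[o,b,c]]=[a,b,c]$ by \eqref{h.assoc} and cancellation.

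The only genuinely nontrivial point — and the step I expect to be the main obstacle — is the second equation of \eqref{heap.aff}, namely $\Lambda(\alpha,a,b)=a+\alpha\,\overset{\lra}{ab}$. Unwinding the right-hand side, $\alpha\,\overset{\lra}{ab}=\Lambda(\alpha,o,[o,a,b])$, which by \eqref{r.heap} and $\Lambda(\alpha,o,o)=o$ equals $[o,\Lambda(\alpha,o,a),\Lambda(\alpha,o,b)]$; translating $a$ by it and simplifying with \eqref{h.assoc} reduces the goal to the identity $\Lambda(\alpha,a,b)=[a,\Lambda(\alpha,o,a),\Lambda(\alpha,o,b)]$. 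This is exactly where the base-change axiom \eqref{b.change} is needed: specialising \eqref{b.change} to $c=o$ gives $\Lambda(\alpha,a,b)=[\Lambda(\alpha,o,b),\Lambda(\alpha,o,a),a]$, and a single application of the symmetry \eqref{h.ab} matches it with the required expression. Thus \eqref{b.change} is precisely the ingredient that forces the frame-independent map $\Lambda$ to agree with the frame-dependent scaling $a+\alpha\,\overset{\lra}{ab}$, while everything else is bookkeeping with the heap identities.
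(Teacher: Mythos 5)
Your proposal is correct and follows exactly the route the paper intends: the paper's proof consists only of the remark that the verification is a straightforward calculation hinging on the identification $\overset{\lra}{ab}=[o,a,b]$, which is precisely the formula your argument is built around. Your write-up simply supplies the omitted details (including the correct observation that \eqref{b.change} with $c=o$ is the one axiom needed to match $\Lambda(\alpha,a,b)$ with $a+\alpha\,\overset{\lra}{ab}$), so there is nothing to add.
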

\begin{proof}
This can be verified by a straightforward calculation. We only mention that the unique vector $\overset{\lra}{ab} \in A(o)$ such that $b= a+\overset{\lra}{ab}$ is given by $\overset{\lra}{ab} = [o,a,b]$.
\end{proof}

Lemma~\ref{lem.aff} indicates that a vector space that appears in the traditional definition of an affine space is a secondary or derived object. In a similar way a linear transformation through which traditionally an affine transformation is defined is an artefact. For,  a homomorphism of affine spaces  $f: A\lra B$ as defined in \eqref{hom.aff} yields a linear transformation
\begin{equation}\label{linearised}
\overset{\to}{f}: \overset{\lra}A \lra \overset{\lra}B, \qquad a\lto [f(a),f(o_A),o_B] = f(a)-f(o_A),
\end{equation}
for any choice $o_A\in A$ and $o_B\in B$ so that $\overset{\lra}A = A(o_A)$ and $\overset{\lra}B = B(o_B)$. One easily checks that $\overset{\to}{f}$ has the required property, 
$$\overset{\to}{f}\left(\overset{\lra}{ab}\right) = \overset{-\!-\!-\!-\!-\!\lra}{f(a)f(b)}.
$$

An associative ring is an abelian group with an associative multiplication distributing over the addition. Exploring the connection between abelian groups and heaps one says that an abelian heap $A$ together with an associative operation $(a,b)\lto ab$ such that, for all $a,b,c,d\in A$,
\begin{equation}\label{truss}
a[b,c,d] = [ab,ac,ad] \quad \& \quad [a,b,c]d = [ad,bd,cd],
\end{equation}
is a {\em truss} \cite{Brz:tru}, \cite{Brz:par}. In other words, in a truss the multiplication distributes over the heap operation. In particular, if $A$ is an affine space with the multiplication $m:A\times A\to A$ that is bi-affine transformation, i.e., for all $a\in A$, both $m(-,a), m(a,-) :A \lra A$ are affine transformations, then $A$ is a truss (since the preservation of heap operations embedded in the definition of an affine transformation yields necessary distributive laws). 

At this point it might be worth stressing  that if $A$ is an abelian group understood as a heap via \eqref{g.h}, the distributive laws \eqref{truss}, which read now
$$
a(b-c+d) = ab-ac+ad \quad \& \quad (a-b+c)d = ad-bd+cd,
$$
do not imply that $a0=0a=0$. Consequently there are typically more trusses than rings (on a given abelian group). For example \cite[Theorem~3.51]{Brz:par} relates all isomorphism classes of commutative trusses on the abelian group of integers $\ZZ$ to orbits of the action of the infinite dihedral group on the set of $2\times 2$ integer idempotent matrices of trace one. Elsewhere \cite{AndBrz:ide} it is shown that there are twenty-three isomorphism classes of trusses on the abelian group $\ZZ_p\oplus \ZZ_p$ as opposed to just eight classes of rings. It might be whorthy of a note that trusses unify rings with {\em braces} \cite{Rum:bra} \cite{CedJes:bra} which play important role in the theory of Jacobson radical rings and the set-theoretic version of the quantum Yang-Baxter equation.

Finally, we would like to mention in passing that despite involving a compatibility between a binary semigroup operation (multiplication) and ternary operation trusses are different from {\em $\mathit{[3,2}]$-rings} introduced in \cite{Cup:rin}, which involve a ternary abelian 3-group structure rather than ternary abelian heap operation.

\section{Heaps of Lie affebras}
The main notion of this note is introduced in the following
\begin{definition}\label{def.Lie}
Let $A$ be an abelian heap with a ternary operation $[-,-,-]$. A {\em Lie bracket} on $A$ is a function
$$
\lb -,-,-\rb  : A\times A\times A\lra A, \qquad (a,b,c)\lto \lb a,b,c\rb ,
$$
such that, for all $a,b\in A$ $\lb a,b,-\rb ,\lb a,-,b\rb , \lb -,a,b\rb :A\lra A$ are heap homomorphisms, and, for all $a,b,c,o\in A$,
\begin{subequations}\label{Lie}
\begin{equation} \label{aa}
\lb a,b,a\rb  = b,
\end{equation}
\begin{equation} \label{as}
\Big[\lb a,b,c\rb ,b,\lb c,b,a\rb \Big] = b,
\end{equation}
\begin{equation} \label{Jacobi}
\lb  \lb a,o,b\rb ,o,c\rb  = \Big[\lb o,o,a\rb , \lb  \lb b,o, c\rb ,o,a\rb , \lb o,o,b\rb , \lb  \lb c,o,a\rb ,o,b\rb , \lb o,o,c\rb \Big].
\end{equation}
\end{subequations}
A heap together with a Lie bracket is called a {\em Lie truss}. 

If $A$ is an affine space and $\lb a,b,-\rb ,\lb a,-,b\rb , \lb -,a,b\rb :A\lra A$ are affine transformations, then $A$ is called a {\em heap of Lie affebras}.
\end{definition}

Condition \eqref{aa} can be seen as the {\em nilpotency} of a Lie bracket. Equation \eqref{as} can be equivalently stated as
$$
\lb c,b,a\rb = \Big[b,\lb a,b,c\rb ,b \Big] .
$$
 In view of the way in which the negation in the retract abelian group $A(b)$ is defined (see equation \eqref{retract}) it  clearly reflects the anti-symmetry  of a Lie bracket. Finally \eqref{Jacobi} is the heap version of the Jacobi identity. It can be equivalently stated as the equality:
$$
\Big[\lb  \lb a,o,b\rb ,o,c\rb  ,\lb o,o,a\rb , \lb  \lb b,o,c\rb ,o,a\rb , \lb o,o,b\rb , \lb  \lb c,o,a\rb ,o,b\rb , \lb o,o,c\rb , o\Big] =o,
$$
for all $a,b,c,o\in A$. In this form its cyclic nature with respect to $a\to b \to c$ is more transparent (and hence it might make the comparison with the usual Jacobi identity for Lie algebras easier).

\begin{remark}\label{rem.strong}
In all examples of Lie trusses and heaps of Lie affebras $(A,\lb -, -,-\rb)$, the Lie bracket satisfies the following {\em strong Jacobi identity}:
\begin{equation} \label{Jacobi.s}
\lb  \lb a,d,b\rb ,e,c\rb  = \Big[\lb d,e,a\rb , \lb  \lb b,d,c\rb ,e,a\rb , \lb d,e,b\rb , \lb  \lb c,d,a\rb ,e,b\rb , \lb d,e,c\rb \Big],
\end{equation}
for all $a,b,c,d,e\in A$. Obviously, \eqref{Jacobi.s} implies \eqref{Jacobi} although it is unlikely that the opposite implication holds in full generality. However, a Lie bracket on a heap can always be modified to produce a bracket which satisfies the strong Jacobi identity; see Corollary~\ref{cor.strong} below.
\end{remark}

\begin{example}\label{ex.Lie.com}
For any truss $A$, the operation
\begin{equation}\label{prod.Lie}
 \lb a,b,c\rb  = [ac,ca,b], \qquad \mbox{for all $a,b,c\in A$},
\end{equation}
is a Lie bracket on the heap $A$ satisfying the strong Jacobi identity \eqref{Jacobi.s}. In particular, $(A, \lb -,-,-\rb )$ is a Lie truss.
\end{example}
\begin{proof}
It might be instructive to check this example explicitly. The bracket is a heap homomorphism on the first argument, by the distributive law of trusses and reshuffling and cancellation rules,
$$
\begin{aligned}
\Big[\lb a,b,c\rb , \lb a',b,c\rb , \lb a'',b,c\rb \Big] &= [ac,ca,b,a'c,ca',b,a''c,ca'',b]\\
&= [ac,a'c, a''c, ca,ca',ca'',b,b,b] = \Big[[a,a',a'']c,c[a,a',a''],b\Big].
\end{aligned}
$$
The heap homomorphism property of the third argument is proven symmetrically, while the one in the middle term is immediate by the cancellation rule. Also properties \eqref{aa} and \eqref{as} follow by the cancellation rule. Next, we start with the left hand side of the strong Jacobi identity
 \eqref{Jacobi.s}
$$
\begin{aligned}
\lb  \lb a,d,b\rb ,e,c\rb  &= \Big[[ab,ba,d]c, c[ab,ba,d], e\Big] = [abc,bac,dc, cab,cba,cd, e],
\end{aligned}
$$
by the truss distributive law. Note that the second and fourth terms in the heap operation on the right hand side of \eqref{Jacobi.s} are obtained by the cyclic permutation of $a,b,c$ on the left hand side, and hence we obtain
$$
\begin{aligned}
\mathrm{RHS}&=[da,ad,e,bca,cba,da, abc,acb,ad, e, db,bd,e, cab,acb,db, bca,bac,bd, e,dc,cd,e].
\end{aligned}
$$
By cancelling identical elements occupying positions of opposite parity and then reshuffling the remainders so that the parity of the positions of elements remains unchanged we obtain:
$$
\begin{aligned}
\mathrm{RHS}&=[cba,cab, abc,bac,dc,cd,e]= [abc,bac,dc, cab,cba,cd, e].
\end{aligned}
$$
This is equal to the left hand side of the strong Jacobi identity as required.
%
\end{proof}

\begin{example}\label{ex.deriv}
A {\em derivation} on a truss $A$ is a heap homomorphism $D:A\lra A$ satisfying the following Leibniz rule
\begin{equation}\label{Leibniz}
D(ab) = \big[D(a)b, ab, aD(b)\big],
\end{equation}
for all $a,b\in A$. The set $\Der(A)$ of all derivations on $A$ is a Lie truss with the pointwise heap operation, for all $D_1,D_2,D_3\in \Der{(A)}$,
\begin{equation}\label{deriv.heap}
\big[D_1, D_2, D_3\big](a) = \big[D_1(a), D_2(a), D_3(a)\big], \qquad \mbox{for all $a\in A$},
\end{equation}
and the Lie bracket
\begin{equation}\label{deriv.Lie}
\blb D_1, D_2, D_3\brb  = \big[D_1D_3, D_3D_1, D_2\big],
\end{equation}
where the composition of derivations is denoted by juxtaposition. Furthermore, the bracket satisfies the strong Jacobi identity \eqref{Jacobi.s}. The detailed study of derivations on trusses is planned to be presented in a forthcoming publication \cite{Brz:der}.
\end{example}
\begin{proof}
Since the form of the bracket \eqref{deriv.Lie} is identical with the form of the bracket \eqref{prod.Lie}, once we can prove that operations \eqref{deriv.heap} and \eqref{deriv.Lie} are well-defined on the set of derivations, the same reasoning as in the proof of Example~\ref{ex.Lie.com} will establish the assertion. Thus, we take any $a,b\in A$ and, using reshuffling and cancellation rules for the heap operation, the Leibniz rule \eqref{Leibniz} as well as the distributive laws of trusses, compute
$$
\begin{aligned}
    \big[D_1, D_2, D_3\big](ab) &=\big[D_1(ab), D_2(ab), D_3(ab)\big] \\
    &=\Big[D_1(a)b, ab, aD_1(b), D_2(a)b, ab, aD_2(b), D_3(a)b, ab, aD_3(b)\Big]\\
    &= \Big[\big[D_1(a)b, D_2(a)b,D_3(a)b\big], [ab,  ab,  ab], \big[aD_1(b), aD_2(b), aD_3(b)\big]\Big]\\
    &= \Big[\big[D_1(a), D_2(a),D_3(a)\big]b, ab, a\big[D_1(b), D_2(b), D_3(b)\big]\Big]\\
    &= \Big[\big[D_1, D_2,D_3\big](a)b, ab, a\big[D_1, D_2, D_3\big](b)\Big].
\end{aligned}
$$
Hence $\big[D_1, D_2,D_3\big]$ is a derivation as required. Next, using the Leibniz rule and the fact that derivations are homomorphisms of heaps, we can compute
$$
\begin{aligned}
\blb D_1, D_2, D_3\brb &(ab)  = \big[D_1D_3(ab), D_3D_1(ab), D_2(ab)\big]\\ 
&= \big[D_1\big(\big[D_3(a)b,ab,aD_3(b)\big]\big), D_3\big(\big[D_1(a)b,ab,aD_1(b)\big]\big), D_2(a)b,ab,aD_2(b)\big]\\
&= \big[D_1D_3(a)b,D_3(a)b , D_3(a)D_1(b), D_1(a)b,ab,aD_1(b),D_1(a)D_3(b),aD_3(b),\\
&~~~~~~~aD_1D_3(b), 
D_3D_1(a)b,D_1(a)b , D_1(a)D_3(b), D_3(a)b,ab,aD_3(b),\\
&~~~~~~~D_3(a)D_1(b),aD_1(b), aD_3D_1(b),D_2(a)b,ab,aD_2(b)\big] .
\end{aligned}
$$
After cancelling identical terms occupying positions of opposite parity and reshuffling remaining elements, so that the parity of their final positions is the same as the original ones, we obtain
$$
\begin{aligned}
\blb D_1, D_2, D_3\brb (ab)  &= \big[D_1D_3(a)b, D_3D_1(a)b, D_2(a)b, ab, aD_1D_3(b), aD_3D_1(b),aD_2(b)\big] \\
&= \Big[\big[D_1D_3(a), D_3D_1(a), D_2(a)\big] b, ab, a\big[D_1D_3(b), D_3D_1(b),D_2(b)\big]\Big]\\
&= \Big[\blb D_1, D_2, D_3\brb (a) b, ab, a\blb D_1, D_2, D_3\brb(b)\Big].
\end{aligned}
$$
In deriving the second equality we have explored the distributive laws of trusses. Thus the operation \eqref{deriv.Lie} is well-defined on the heap of derivations $\Der(A)$.
\end{proof}

Recall from \cite{GraGra:Lie} that a {\em Lie affebra} is an affine space $A$ (over a vector space $\overset{\to}A$) together with an anti-symmetric bi-affine operation
$$
\{-,-\} : A\times A\lra \overset{\to}A, \qquad (a,b)\lto \{a,b\}, 
$$
that satisfies the following Jacobi identity:
\begin{equation}\label{jacobi.aff}
{}_v\{\{a,b,\},c\} + {}_v\{\{b,c,\},a\}+{}_v\{\{c,a,\},b\} = 0,
\end{equation}
where 
$$
{}_v\{-,-\} : \overset{\to}A\times A\lra \overset{\to}A,
$$
denotes the linearisation of the bracket $\{-,-\}$ on the first argument. That is, for all $c\in A$, ${}_v\{-,c\}: a \lto {}_v\{a,c\}$ is the linear transformation corresponding to the affine transformation $\{-,c\}: A \lra \overset{\to}A$, i.e.,
$$
{}_v\Big\{\overset{\lra}{ab},c\Big\} = \overset{-\!-\!-\!-\!-\!-\!-\!-\!-\!-\!\lra}{\{a,c\}\{b,c\}}, \qquad \{a+v,c\} =  \{a,c\}+ {}_v\{v,c\},
$$
for all $a,b\in A$ and $v\in \overset{\to}A$.

The operation $\{-,-\}$ is referred to as a {\em Lie bracket}. The main result of this note is contained in the following
\begin{proposition}\label{prop.affebra}
Let $(A,\{-,-\})$ be a Lie affebra over an $\FF$-vector space $\overset{\to}A$.  View $A$ as a heap with ternary $\FF$-action as in \eqref{heap.aff}. If the field $\FF$ has characteristic different from 2, then $A$ is a heap of Lie affebras with the bracket
\begin{equation}\label{af-heap}
\lb a,b,c\rb  = b+\{a,c\},
\end{equation}
that satisfies the strong Jacobi identity \eqref{Jacobi.s}.

If $(A,\lb -,-,-\rb )$ is a heap of Lie affebras, then, for all $o\in A$, the affine space $A$ over the vector space $\overset{\to} A = A(o)$ is a Lie affebra with the bracket 
\begin{equation}\label{heap-af}
\{-,-\} =  \lb -,o,-\rb .
\end{equation}
\end{proposition}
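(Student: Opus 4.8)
The plan is to treat both implications as a dictionary between the vector-valued Jacobi identity \eqref{jacobi.aff} and its heap incarnations, carrying out every computation in a retract, where the heap operation on five elements becomes $[x_1,\dots,x_5]=x_1-x_2+x_3-x_4+x_5$ and the bracket \eqref{af-heap} reads $\lb a,b,c\rb=b+\{a,c\}$. For the first implication I would begin by recording the consequence of anti-symmetry that $\{a,a\}=0$: from $\{a,a\}=-\{a,a\}$ one gets $2\{a,a\}=0$, and this is precisely the point at which $\mathrm{char}\,\FF\neq 2$ is needed. I would then check that each partial map is an affine transformation: $\lb a,-,b\rb$ is the translation by the vector $\{a,b\}$, while $\lb-,b,c\rb=b+\{-,c\}$ and $\lb a,b,-\rb=b+\{a,-\}$ are composites of an affine map into $\overset{\to}A$ (bi-affineness of $\{-,-\}$) with the affine isomorphism $v\mapsto b+v$. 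Condition \eqref{aa} is then immediate from $\{a,a\}=0$, and \eqref{as} follows by evaluating $[\,b+\{a,c\},\,b,\,b+\{c,a\}\,]$ in the retract $A(b)$ and using $\{c,a\}=-\{a,c\}$.

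The core computation is the strong Jacobi identity \eqref{Jacobi.s}. Working in the retract $A(e)$ and using the linearisation rule $\{d+v,c\}=\{d,c\}+{}_v\{v,c\}$, I would expand the left-hand side as $\lb\lb a,d,b\rb,e,c\rb=e+\{d,c\}+{}_v\{\{a,b\},c\}$. Expanding each of the five entries on the right-hand side in the same way, every term of the shape $\{d,\cdot\}$ appears twice in positions of opposite parity and cancels, leaving $e+\{d,c\}-{}_v\{\{b,c\},a\}-{}_v\{\{c,a\},b\}$. Hence \eqref{Jacobi.s} holds for all $d,e$ exactly when ${}_v\{\{a,b\},c\}+{}_v\{\{b,c\},a\}+{}_v\{\{c,a\},b\}=0$, which is the affebra Jacobi identity \eqref{jacobi.aff}. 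The pleasant feature here is that the auxiliary points $d,e$ drop out entirely, so the assumed identity \eqref{jacobi.aff} yields the strong form directly rather than only the weak form \eqref{Jacobi}.

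For the converse, fix $o$, put $\overset{\to}A=A(o)$ and $\{-,-\}=\lb-,o,-\rb$. Bi-affineness is immediate, since $\lb-,o,c\rb$ and $\lb a,o,-\rb$ are among the maps assumed affine in a heap of Lie affebras. Anti-symmetry comes from \eqref{as} with middle entry $o$: read in $A(o)$ this says $\lb a,o,c\rb+\lb c,o,a\rb=o$, i.e.\ $\{a,c\}=-\{c,a\}$. For the Jacobi identity I would extract the linearisation from $\{o+v,c\}=\{o,c\}+{}_v\{v,c\}$, obtaining ${}_v\{v,c\}=\lb v,o,c\rb-\lb o,o,c\rb$, then substitute $v=\{a,b\}=\lb a,o,b\rb$ and read \eqref{Jacobi} in $A(o)$. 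Rearranging \eqref{Jacobi} gives exactly $\sum_{\mathrm{cyc}}\lb\lb a,o,b\rb,o,c\rb=\sum_{\mathrm{cyc}}\lb o,o,c\rb$, whence the cyclic sum of the ${}_v\{\{a,b\},c\}$ vanishes, which is \eqref{jacobi.aff}.

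I expect the only real obstacle to be the bookkeeping in the strong-Jacobi expansion, namely tracking parities so that all $\{d,\cdot\}$ contributions cancel, together with the care needed to distinguish points of $A$ from vectors of $A(o)$ when applying the linearisation formula. Everything else is routine verification of \eqref{aa}, \eqref{as} and the affine/bi-affine conditions, and the role of $\mathrm{char}\,\FF\neq 2$ is confined to the single step deducing $\{a,a\}=0$ from anti-symmetry, which is exactly what \eqref{aa} requires.
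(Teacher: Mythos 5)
Your proposal is correct and follows essentially the same route as the paper: the same reduction of \eqref{aa} and \eqref{as} to $\{a,a\}=0$ and anti-symmetry, the same expansion of both sides of \eqref{Jacobi.s} in the retract at $e$ with the $\{d,\cdot\}$ terms cancelling to leave exactly the affebra Jacobi identity \eqref{jacobi.aff}, and the same linearisation formula ${}_v\{v,c\}=\lb v,o,c\rb-\lb o,o,c\rb$ for the converse. The only (harmless) stylistic difference is that you verify affineness of the partial maps by exhibiting them as composites of affine maps, whereas the paper checks the $\Lambda$-preservation property \eqref{hom.aff} by direct computation.
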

\begin{proof}
We start with a Lie affebra $(A,\{-,-\})$. If characteristic of $\FF$ is different from 2, then the anti-symmetry of $\{-,-\}$ implies that $\{a,a\} =0$, and hence the property \eqref{aa} for \eqref{af-heap} holds. The property \eqref{as} also follows by the anti-symmetry of $\{-,-\}$.

Since the bracket in a Lie affebra is bi-affine, it is also bi-heap homomorphism, which implies that the bracket \eqref{af-heap} is a heap homomorphism in the first and the third arguments. That it is also a heap homomorphism in the middle one is straightforward. We now check the action preservation property \eqref{hom.aff} in the first argument. Using the fact that $\{-,-\}$ is affine on the first argument we can calculate, for any $\alpha\in \FF$, $a,b,c,d\in A$,
$$
\begin{aligned}
\Lambda(\alpha, \lb a,c,d\rb , \lb b,c,d\rb ) &= \Lambda(\alpha, c+\{a,d\}, c+\{b,d\})\\
&= c+\{a,d\}+ \alpha \ \overset{-\!-\!-\!-\!-\!-\!-\!-\!-\!-\!-\!-\!-\!-\!-\!-\!-\!-\!-\!-\!-\!-\!-\!\lra}{(c+\{a,d\})( c+\{b,d\})} \\
&=  c+\{a,d\} + \alpha \ (\{b,d\} -\{a,d\} ) \\
&= c +\{a,d\} +\alpha \ {}_v\{\overset{\lra}{ab}, d\}   = c+\{a,d\} + {}_v\{\alpha \overset{\lra}{ab}, d\}  \\
&=c+ \{a+\alpha \overset{\lra}{ab}, d\}  = \lb \Lambda(\alpha, a,b),c,d\rb .
\end{aligned}
$$
The action-preservation in the third argument follows by the fact that $\{-,-\}$ is affine in the second argument. For the middle one, we can compute
$$
\begin{aligned}
\Lambda(\alpha, \lb a,b,d\rb , \lb a,c,d\rb ) &= b+ \{a,d\} + \alpha \ \overset{-\!-\!-\!-\!-\!-\!-\!-\!-\!-\!-\!-\!-\!-\!-\!-\!-\!-\!-\!-\!-\!-\!-\!\lra}{(b+\{a,d\})(c+\{a,d\})}\\
&=  b +  \alpha \ \overset{\lra}{bc} + \{a,d\} = \Lambda(\alpha, b,c) + \{a,d\} =
\lb a, \Lambda(\alpha, b,c), d\rb ,
\end{aligned}
$$
as required.

We start with the left-hand side of the strong Jacobi identity \eqref{Jacobi.s},
$$
\begin{aligned}
\mathrm{LHS} &= \lb  \lb a,d,b\rb ,e,c\rb  = \lb d+\{a,b\}, e,c\rb  = e+\{d+\{a,b\},c\}  = e+\{d,c\} + {}_v\{\{a,b\},c\}
\end{aligned}
$$
Using the cyclic nature of the Jacobi identity it is easily found that
$$
\begin{aligned}
\mathrm{RHS}&=e+\{d,c\}+\overset{-\!-\!-\!-\!-\!-\!-\!-\!-\!-\!-\!-\!-\!-\!-\!-\!-\!-\!-\!-\!-\!-\!-\!-\!-\!-\!-\!-\!-\!-\!-\!-\!-\!-\!-\!-\!-\!-\!-\!\lra}{(e+\{d,a\} +{}_v\{\{b,c\},a\})(e+\{d,a\})}  + \overset{-\!-\!-\!-\!-\!-\!-\!-\!-\!-\!-\!-\!-\!-\!-\!-\!-\!-\!-\!-\!-\!-\!-\!-\!-\!-\!-\!-\!-\!-\!-\!-\!-\!-\!-\!-\!-\!-\!-\!\lra}{(e+\{d,b\} +{}_v\{\{c,a\},b\})(e+\{d,b\})} \\
&= e+\{d,c\}  -{}_v\{\{b,c\},a\}  - {}_v\{\{c,a\},b\}  = \mathrm{LHS},
\end{aligned}
$$
by the Jacobi identity \eqref{jacobi.aff}.

Conversely, assume that $(A,\lb -,-,-\rb )$ is a heap of Lie affebras and choose to view $A$ as an affine space over the vector space $\overset{\to}A = A(o)$. Since $\lb -,-,-\rb $ is affine on the first and third arguments, the bracket $\{-,-\}$ given by \eqref{heap-af} is bi-affine. The antisymmetry of $\{-,-\}$ follows by \eqref{as} (note that $o$ is the zero vector in $\overset{\to}A$ and $[a,o,b] = a+b$ in $\overset{\to}A$). In view of \eqref{linearised}
$$
{}_v\{a,b\} = \{a,b\} - \{o,b\} = \lb a,o,b\rb  - \lb o,o,b\rb .
$$
Making use of the fact that $[a,b,c] = a-b+c$ in $A(o)$, the Jacobi identity \eqref{Jacobi} can be rewritten as:
$$
\lb \lb a,o,b\rb ,o,c\rb  = \lb o,o,a\rb  - \lb \lb b,o,c\rb ,o,a\rb  +\lb o,o,b\rb  - \lb \lb c,o,a\rb ,o,b\rb  + \lb o,o,c\rb ,
$$
or, equivalently,
$$
\{ \{a,b\},c\} - \{o,c\} + \{\{b,c\},a\} - \{o,a\} +\{\{c,a\},b\} - \{o,b\} =o,
$$
that is,
$$
{}_v\{\{a,b\},c\}+ {}_v\{\{b,c\},a\}  +{}_v\{\{c,a\},b\}  =o,
$$ 
as required for a Lie affebra. 
\end{proof}

Proposition~\ref{prop.affebra} immediately implies that if $A$ is a Lie algebra (over a field of characteristic different from  2), then it is also a heap of Lie affebras with the ternary Lie bracket \eqref{af-heap}. More generally, if $A$ is a Lie ring, then  it is a Lie truss, with the ternary Lie bracket given by formula \eqref{af-heap}. Conversely, 
 every Lie truss leads to a family of Lie rings.

\begin{proposition}\label{prop.ring}
If $(A,\lb -,-,-\rb )$ is a Lie truss (resp.\ heap of Lie affebras), then, for all $o\in A$, the retract $A(o)$ is a Lie ring (resp.\ Lie algebra) with the bracket:
\begin{equation}\label{Lie.bracket}
{}[a,b] = \lb a,o,b\rb  - \lb a,o,o\rb  - \lb o,o,b\rb  .
\end{equation}
\end{proposition}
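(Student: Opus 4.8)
The plan is to carry out the whole argument inside the retract abelian group $A(o)$, where the heap operation becomes $[x,y,z]=x-y+z$, addition is $x+y=[x,o,y]$ and negation is $-x=[o,x,o]$ (see \eqref{retract} and \eqref{g.h}). To lighten the notation I will abbreviate $\beta(x,y):=\lb x,o,y\rb$. Since $\lb o,o,o\rb =o$ by \eqref{aa}, the proposed bracket reads, in $A(o)$,
$$
{}[a,b]=\beta(a,b)-\beta(a,o)-\beta(o,b),
$$
and I must establish three properties: biadditivity, antisymmetry, and the Jacobi identity.

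Biadditivity and antisymmetry are the routine parts. Because $\lb -,o,b\rb$ and $\lb a,o,-\rb$ are heap homomorphisms, the maps $\beta(-,b)$ and $\beta(a,-)$ satisfy $f(x-y+z)=f(x)-f(y)+f(z)$ in $A(o)$; equivalently each is group-affine, an additive map plus a constant. Writing $\beta(x,b)=\ell_b(x)+\beta(o,b)$ with $\ell_b$ additive and $\ell_b(o)=o$, substitution into the displayed formula makes the constant terms telescope, giving $[a,b]=\ell_b(a)-\ell_o(a)$, a difference of additive maps and hence additive in $a$; the symmetric computation via $\beta(a,-)$ gives additivity in $b$. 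For a heap of Lie affebras these same maps are affine transformations, so the action-preservation property \eqref{hom.aff}, combined with $\alpha x=\Lambda(\alpha,o,x)$ from \eqref{scalar}, upgrades each $\ell_b$ to an $\FF$-linear map and yields $\FF$-bilinearity, i.e.\ the Lie-algebra case. Antisymmetry is immediate: specialising \eqref{as} to middle entry $o$ gives $[\beta(a,c),o,\beta(c,a)]=o$, that is $\beta(a,c)+\beta(c,a)=o$ in $A(o)$, so $\beta$ is antisymmetric; feeding $\beta(y,x)=-\beta(x,y)$ into the formula for $[a,b]$ gives $[a,b]=-[b,a]$ at once.

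The main obstacle is the Jacobi identity, and the key device is polarisation. Rewriting the heap Jacobi identity \eqref{Jacobi} additively in $A(o)$ (using $[x_1,\dots,x_5]=x_1-x_2+x_3-x_4+x_5$) and rearranging produces the once-linearised identity
$$
\beta(\beta(a,b),c)+\beta(\beta(b,c),a)+\beta(\beta(c,a),b)=\beta(o,a)+\beta(o,b)+\beta(o,c),
$$
valid for all $a,b,c\in A$. Let $T$ be the third finite-difference operator based at $o$, namely $T[f](a,b,c)=\sum_{\epsilon\in\{0,1\}^3}(-1)^{3-|\epsilon|}f(\epsilon_1 a,\epsilon_2 b,\epsilon_3 c)$, with the convention $0\cdot x:=o$. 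I will check, by expanding $[[a,b],c]=\beta([a,b],c)-\beta([a,b],o)-\beta(o,c)$ and repeatedly using the additivity of the linear parts $\ell_{(-)}$, that $T[\beta(\beta(a,b),c)]=[[a,b],c]$; this short but bookkeeping-heavy verification is exactly the step I expect to be the crux. Since $T$ polarises all three slots symmetrically and is therefore blind to cyclic relabelling, applying $T$ to the displayed identity turns its left-hand side into $[[a,b],c]+[[b,c],a]+[[c,a],b]$, while on the right each summand $\beta(o,a)$ depends on a single variable, so its third difference vanishes (the differences in the two inert slots cancel in pairs). Hence the cyclic sum of double brackets is $o$, which is precisely the Jacobi identity for $[-,-]$. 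No separate work is needed to pass between the Lie-ring and Lie-algebra statements, as antisymmetry and Jacobi are identical in both and only the scalar-linearity, already secured above, distinguishes them.
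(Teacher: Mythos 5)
Your argument is correct and is essentially the paper's own proof in different clothing: you work in the retract $A(o)$, derive the additive form of \eqref{Jacobi} (the paper's \eqref{J.1}), and show the cyclic sum of double brackets telescopes to $o$; your third-difference operator $T$ applied to that identity is exactly the paper's combined use of \eqref{J.1} together with its specialisations \eqref{J.2} (at $c=o$) and \eqref{J.3} (at $b=c=o$), which are the evaluations of \eqref{J.1} at the vertices of the cube over which $T$ sums. The one step you defer, $T[\beta(\beta(a,b),c)]=[[a,b],c]$, does check out: expanding $\beta([a,b],c)-\beta([a,b],o)-\beta(o,c)$ via the heap-homomorphism property of $\beta(-,c)$ (noting $\beta(-z,c)=2\beta(o,c)-\beta(z,c)$) and using $\beta(o,o)=o$ reproduces precisely the seven-term expansion of $[[a,b],c]$ written out in the paper's proof.
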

\begin{proof}
Recall that from the point of view of the abelian group structure $A(o)$, the heap operation on $A$ has the form $[a,b,c] = a-b+c$. First we prove that the bracket \eqref{Lie.bracket} is additive in both arguments. For all $a,b,c\in A$,
$$
\begin{aligned}
{}[a+b,c] &= \lb a-o+b,o,c\rb  - \lb a-o+b,o,o\rb  - \lb o,o,c\rb  \\
&= \lb a,o,c\rb  - \lb o,o,c\rb  + \lb b,o,c\rb  - \lb a,o,o\rb  + \lb o,o,o\rb -\lb b,o,o\rb  - \lb o,o,c\rb  \\
&= [a,c] +[b,c],
\end{aligned}
$$
since $\lb -,-,-\rb $ is a heap operation on the first argument and $\lb o,o,o\rb  =o$ by \eqref{aa}. The additivity in the second argument is proven in a similar way. In case of the heap of Lie affebras the linearity of \eqref{Lie.bracket} follows by the preservation of the $\FF$-action $\Lambda$ on $A$ by the first and third entries in $\lb -,-,-\rb $. The bracket \eqref{Lie.bracket} is antisymmetric by \eqref{as}. 

Written in terms of addition in $A(o)$, Jacobi identity \eqref{Jacobi}  reads:
\begin{equation}\label{J.1}
\lb \lb a,o,b\rb ,o,c\rb  + \lb \lb b,o,c\rb ,o,a\rb  + \lb \lb c,o,a\rb ,o,b\rb  = \lb o,o,a\rb + \lb o,o,b\rb  +\lb o,o,c\rb .
\end{equation}
Consequently and in view of \eqref{aa},
\begin{equation}\label{J.2}
\lb \lb a,o,b\rb ,o,o\rb  + \lb \lb b,o,o\rb ,o,a\rb  + \lb \lb o,o,a\rb ,o,b\rb  = \lb o,o,a\rb + \lb o,o,b\rb ,
\end{equation}
and 
\begin{equation}\label{J.3}
\lb \lb a,o,o\rb ,o,o\rb  +  \lb \lb o,o,a\rb ,o,o\rb  = o.
\end{equation}
On the other hand,
$$
\begin{aligned}
{}[[a,b],c] =&\; \lb \lb a,o,b\rb ,o,c\rb  - \lb \lb a,o,b\rb ,o,o\rb   - \lb \lb a,o,o\rb ,o,c\rb   - \lb \lb o,o,b\rb ,o,c\rb \\
& + \lb \lb a,o,o\rb ,o,o\rb   + \lb \lb o,o,b\rb ,o,o\rb  +\lb o,o,c\rb .
\end{aligned}
$$
Therefore,
$$
\begin{aligned}
{}[[a,b],c] + \mbox{cycl.} =&
\lb \lb a,o,b\rb ,o,c\rb  - \lb \lb a,o,b\rb ,o,o\rb   - \lb \lb a,o,o\rb ,o,c\rb   - \lb \lb o,o,b\rb ,o,c\rb \\
& + \lb \lb a,o,o\rb ,o,o\rb   + \lb \lb o,o,b\rb ,o,o\rb  +\lb o,o,c\rb \\
&+ \lb \lb b,o,c\rb ,o,a\rb  - \lb \lb b,o,c\rb ,o,o\rb   - \lb \lb b,o,o\rb ,o,a\rb   - \lb \lb o,o,c\rb ,o,a\rb \\
& + \lb \lb b,o,o\rb ,o,o\rb   + \lb \lb o,o,c\rb ,o,o\rb  +\lb o,o,a\rb \\
&+ \lb \lb c,o,a\rb ,o,b\rb  - \lb \lb c,o,a\rb ,o,o\rb   - \lb \lb c,o,o\rb ,o,b\rb   - \lb \lb o,o,a\rb ,o,b\rb \\
& + \lb \lb c,o,o\rb ,o,o\rb   + \lb \lb o,o,a\rb ,o,o\rb  +\lb o,o,b\rb \\
=&\; \lb o,o,a\rb + \lb o,o,b\rb  +\lb o,o,c\rb  - (\lb o,o,a\rb + \lb o,o,b\rb ) - (\lb o,o,b\rb + \lb o,o,c\rb )\\
& - (\lb o,o,c\rb + \lb o,o,a\rb ) + \lb o,o,a\rb  + \lb o,o,b\rb  + \lb o,o,c\rb   = o ,
\end{aligned}
$$
by \eqref{J.1}--\eqref{J.3}. Thus we conclude that $A(o)$ with the bracket \eqref{Lie.bracket} is a Lie ring (resp.\ Lie algebra).
\end{proof}

Combining Proposition~\ref{prop.affebra} with Proposition~\ref{prop.ring} we thus obtain the following way of generating Lie ternary brackets that satisfy the strong version of the Jacobi identity \eqref{Jacobi.s}.

\begin{corollary}\label{cor.strong}
Let $(A, \lb -,-,-\rb)$ be a Lie truss. Then, for all $o\in A$, the ternary operation given by
\begin{equation}\label{strong}
    \lb a,b,c\rb_o = \big[\lb a,o,c\rb , \lb a,o,o\rb, o, \lb o,o,c\rb ,b\big], 
\end{equation}
for all $a,b,c\in A$ is a Lie bracket on the heap $A$ that satisfies the strong Jacobi identity \eqref{Jacobi.s}.
\end{corollary}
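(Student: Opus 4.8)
The plan is to read formula~\eqref{strong} as nothing more than the heap-theoretic transcription of the bracket $b+[a,c]$ that the construction of Proposition~\ref{prop.affebra} attaches to the retract Lie ring $A(o)$; once this is recognised the corollary follows from Propositions~\ref{prop.affebra} and~\ref{prop.ring} together with a short calculation in the heap.

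First I would fix $o\in A$ and use Proposition~\ref{prop.ring} to endow the retract $A(o)$ with its Lie ring structure, whose bracket is $[a,c]=\lb a,o,c\rb-\lb a,o,o\rb-\lb o,o,c\rb$, all operations being taken in the abelian group $A(o)$. I would then feed this Lie ring into the construction of Proposition~\ref{prop.affebra}: as noted in the paragraph following that proposition, every Lie ring $R$ produces a Lie truss on its underlying heap with bracket $(a,b,c)\mapsto b+\{a,c\}$, where $\{-,-\}$ is the ring bracket and $+$ the group addition. The verification of the strong Jacobi identity~\eqref{Jacobi.s} carried out in the proof of Proposition~\ref{prop.affebra} uses only the bi-additivity and anti-symmetry of the bracket together with the ring Jacobi identity, so it applies verbatim here; in particular the characteristic restriction is unnecessary, since a Lie ring satisfies $[a,a]=0$ by definition. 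Taking $R=A(o)$ thus yields a Lie bracket $B(a,b,c)=b+[a,c]$ on the heap $A$ satisfying~\eqref{Jacobi.s}.

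It then remains only to identify $B$ with the operation~\eqref{strong}. For this I would compute $B$ in heap terms, using that in $A(o)$ one has $x+y=[x,o,y]$ and $-y=[o,y,o]$, hence $x-y=[x,y,o]$, and that the heap associated with $A(o)$ via~\eqref{g.h} is the original heap of $A$. Writing $[a,c]=\big[[\lb a,o,c\rb,\lb a,o,o\rb,o],\lb o,o,c\rb,o\big]$ and substituting into $B(a,b,c)=[b,o,[a,c]]$, heap associativity flattens the expression to the seven-term word $\big[b,o,\lb a,o,c\rb,\lb a,o,o\rb,o,\lb o,o,c\rb,o\big]$. The element $o$ appears once at an even and twice at odd positions, so an even-position $o$ cancels against an odd-position $o$, and reshuffling the five remaining entries within their parity classes produces exactly $\big[\lb a,o,c\rb,\lb a,o,o\rb,o,\lb o,o,c\rb,b\big]$, which is~\eqref{strong}.

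I do not expect a genuine obstacle here, as all the structural content is already supplied by the two preceding propositions. The only point demanding care is the final collapse of the seven-term heap word to the five-term form~\eqref{strong}: one must track the parity of every position so that exactly one pair of $o$'s is cancelled and the survivors are reshuffled without violating the parity constraint of the reshuffling rule.
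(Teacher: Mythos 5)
Your proposal is correct and takes essentially the same route as the paper: apply Proposition~\ref{prop.ring} to obtain the Lie ring $A(o)$, feed it back through the construction of Proposition~\ref{prop.affebra} (in its Lie-ring form, where the characteristic hypothesis is indeed vacuous since $[a,a]=0$ holds by definition), and then rewrite $b+[a,c]$ in heap terms via cancellation and reshuffling to recover~\eqref{strong}. The final parity-tracking computation you describe matches the paper's identification $\lb a,b,c\rb_o=\big[[a,c],o,b\big]$ exactly.
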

\begin{proof}
By Proposition~\ref{prop.ring}, $A(o)$ is a Lie ring with the bracket
$$
{}[a,c] = \lb a,o,c\rb  - \lb a,o,o\rb -\lb o,o,c\rb = \big[\lb a,o,c\rb , \lb a,o,o\rb, o, \lb o,o,c\rb ,o\big].
$$
Since 
$$[a,b,c] = [a,o, [o, b,o],o,c] = a-b+c
$$
in $A(o)$, by the arguments of Proposition~\ref{prop.affebra} made explicit after its proof, $A$ is a Lie truss with the bracket
$$
\lb a,b,c\rb_o = [a,c]+b = \big[[a,c],o,b\big] = \big[\lb a,o,c\rb , \lb a,o,o\rb, o, \lb o,o,c\rb ,b\big]
$$
that satisfies the strong Jacobi identity \eqref{Jacobi.s}.
\end{proof}

\section*{Acknowledgements}
I would like to thank Andrew Bruce for drawing my attention to the papers by K.\ Grabowska, J.\ Grabowski \& P.\ Urba\'nski on Lie affebras.
The research is partially supported by the National Science Centre, Poland, grant no. 2019/35/B/ST1/01115.

\end{document}